\newtheorem{theorem}{Theorem}[section]
\newtheorem{lemma}[theorem]{Lemma}
\theoremstyle{definition}
\def\r{\mathbb R}
\def\n{\mathbb N}
\def\s{\mathbb S}
\begin{document}

\title[Stationary surfaces for   the moment of  inertia with constant Gauss curvature]{Stationary surfaces for   the moment of  inertia with constant Gauss curvature}
\author{Rafael L\'opez}
\address{ Departamento de Geometr\'{\i}a y Topolog\'{\i}a\\ Universidad de Granada. 18071 Granada, Spain}
\email{rcamino@ugr.es}
 \keywords{moment of inertia, stationary surface, Gauss curvature, Codazzi equations}
  \subjclass{53A10, 49Q05, 35A15}

\begin{abstract}  Consider the energy $E_\alpha[\Sigma]=\int_\Sigma |p|^\alpha\, d\Sigma$, where $\Sigma$ is a surface in Euclidean space $\r^3$ and $\alpha\in\r$. We prove that planes and spheres are the only stationary surfaces for $E_\alpha$ with constant Gauss curvature. We also characterize these surfaces assuming that a principal curvature is constant or that the mean curvature is constant.\end{abstract}

\maketitle
\section{Introduction and statement of the results}


Euler investigated planar curves $\gamma$ of constant density joining two given points and having minimum moment of inertia with respect to the origin    \cite[p. 53]{eu}. In polar coordinates $r=r(\theta)$, these curves are minimum of the energy 
$$\int_\gamma r^\alpha\, ds=\int_{\theta_1}^{\theta_2} r^\alpha\sqrt{r'^2+r'^2}\ d\theta,$$
 where $\alpha$ is a parameter. In fact, Euler found the extremals of this energy for two explicit parameters. Indeed, if $\alpha=1/2$, the solution is an equilateral hyperbola. If $\alpha=2$, the solution is  $ar^3=\mbox{sec}(3\theta+b)$, $a,b\in\r$. In particular, it is necessary that the two points $\theta_1$ and $\theta_2$ subtend an angle of less than $\pi/3$ at the origin. For any $\alpha$, explicit parametrizations of the extremals can be found \cite{ca,to}.  The case $\alpha=-1$ was considered by Bonnet in \cite{bo}. For   $\alpha=2$, Mason proved that given two points of the extremal curve,   the extremal curve is an absolute minimum of the energy. In case that $|\theta_1-\theta_2|\geq \pi/3$, then the absolute minimum of the energy is the curve formed by the two lines between both points with the origin \cite{ma}. 

Recently, Dierkes and Huisken have extended this problem in arbitrary dimensions  \cite{dh}. In this paper we consider the case of    surfaces in Euclidean space $\r^3$. Let $\Sigma$ be a connected, oriented surface    and let  $\Phi\colon\Sigma\to\r^3$ be  an immersion of $\Sigma$ in    $\r^3$.  For $\alpha\in\r$, define the energy  
$$E_\alpha[\Sigma]=\int_\Sigma |\Phi|^\alpha\, d\Sigma,$$
where $d\Sigma$ denotes the area element induced on $\Sigma$. Critical points of $E_\alpha$ are called {\it $\alpha$-stationary surfaces} and we say  simply stationary surfaces if we do not want to emphasize the value of $\alpha$. Assuming that  $0\not\in\Phi(\Sigma)$, the Euler-Lagrange equation associated to $E_\alpha$ is  
\begin{equation}\label{eq1}
H=\alpha\frac{\langle N,\Phi\rangle}{|\Phi|^2},
\end{equation}
where $H$ and $N$ are the mean curvature and the unit normal vector of $\Sigma$, respectively. Notice that if $\alpha=0$ then  stationary surfaces are minimal surfaces. From now, we will discard the case $\alpha=0$. In \cite{dh}, the authors  investigated  stability of spheres and minimal cones as well as  minimizers of $E_\alpha$. See also \cite{cx,d4}. 

We find all $\alpha$-stationary surfaces in the family of isoparametric surfaces. Recall that an isoparametric surface is a surface with constant Gauss curvature and constant mean curvature. In $\r^3$, the isoparametric surfaces are   planes, spheres and circular cylinders. An easy computation shows the following 
\begin{enumerate}
\item Planes are stationary surfaces if and only if they contain the origin $0\in\r^3$. This holds for any $\alpha\in\r$.
\item The only stationary spheres  are the following:
\begin{enumerate}
\item    Spheres centered at $0$. Here  $\alpha=-2$.
\item    Spheres containing $0$. Here   $\alpha=-4$.  
\end{enumerate}
\item Circular cylinders are not stationary surfaces for all $\alpha\in\r$.
\end{enumerate}

In \cite{dh}, the authors focus on closed stationary surfaces, proving that if $\alpha>-2$, there are not closed stationary surfaces and if $\alpha<-2$, then the closure of any $\alpha$-stationary surface must contain $0$. Finally, if $\alpha=-2$, spheres are the only closed    stable surfaces: see \cite[Thm. 1.6]{dh}.   In fact, for $\alpha=-2$ the hypothesis on stability can be dropped because an argument of comparison with spheres centered at the origin together the maximum principle proves that   closed $-2$-stationary surfaces must be spheres. Definitively, spheres are the only closed $\alpha$-stationary surfaces and this only holds when $\alpha=-2$.  

In the class of stationary surfaces, we point out that the previous two types of examples have constant Gauss curvature $K$: planes with $K=0$ and spheres with  $K\not=0$.  The objective of this paper is to  prove the converse and  characterize these examples as the only  stationary surfaces with constant Gauss curvature.   

\begin{theorem} \label{t1} Planes and spheres are the only stationary surfaces   with   constant Gauss curvature.
\end{theorem}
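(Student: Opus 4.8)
The plan is to classify surfaces $\Sigma$ with constant Gauss curvature $K$ satisfying the Euler--Lagrange equation $H=\alpha\langle N,\Phi\rangle/|\Phi|^2$. I would split the argument according to the sign of $K$. The case $K=0$ should force $\Sigma$ to be a plane, and the case $K\neq 0$ should force a sphere; the constant $K<0$ case and the case of nonconstant mean curvature are where the real work lies.

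First I would treat the case where $\Sigma$ is totally umbilical, i.e. both principal curvatures coincide everywhere. A classical fact is that a connected totally umbilical surface in $\r^3$ is (an open piece of) a plane or a sphere, and one then checks directly which of these satisfy \eqref{eq1}: this is exactly the computation already recorded in the introduction. So the substantive content is to rule out every \emph{non-umbilical} example. To that end I would assume $\Sigma$ has constant $K$ and is nowhere umbilical on some open set, and aim for a contradiction. The natural tool is the Codazzi equations: since $K=\kappa_1\kappa_2$ is constant and the surface is non-umbilic, I can introduce a local principal (curvature-line) coordinate system and express the Codazzi equations in terms of the two principal curvatures $\kappa_1,\kappa_2$ and the metric coefficients $E,G$. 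Writing $H=(\kappa_1+\kappa_2)/2$ and using $\kappa_1\kappa_2=K$ constant, the Codazzi relations constrain how $\kappa_1$ and $\kappa_2$ vary along the coordinate lines.

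The key step is to combine these intrinsic constraints with the extrinsic equation \eqref{eq1}. I would differentiate \eqref{eq1} along the principal directions, using the Gauss and Weingarten formulas to compute the derivatives of $H$, of $\langle N,\Phi\rangle$, and of $|\Phi|^2$ in terms of $\kappa_1,\kappa_2$ and the tangential components of the position vector $\Phi$. Together with the Codazzi system and the constancy of $K$, this should produce an overdetermined system for the unknowns (the principal curvatures and the decomposition $\Phi=\Phi^\top+\langle N,\Phi\rangle N$), which I expect to be inconsistent unless $\kappa_1=\kappa_2$, contradicting the non-umbilic assumption. In practice, I anticipate deriving a relation that forces either $H$ constant (reducing to the isoparametric classification already quoted, where cylinders are excluded) or forces a pointwise algebraic identity among $\kappa_1,\kappa_2,K$ that can only hold in the umbilic case.

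The main obstacle, I expect, is organizing the bookkeeping of the principal-coordinate computation so that the extrinsic equation \eqref{eq1} genuinely interacts with the Codazzi constraints rather than producing a tautology. The delicate point is handling the tangential part $\Phi^\top$ of the position vector: its behavior under the Gauss--Weingarten formulas introduces the ambient geometry into otherwise intrinsic equations, and I must track how $\langle N,\Phi\rangle$ and $|\Phi|$ evolve without losing the constraint $0\notin\Phi(\Sigma)$ that makes \eqref{eq1} well defined. A secondary subtlety is ensuring the local, open-set contradiction propagates to the whole connected surface $\Sigma$, which should follow by a standard connectedness/analyticity argument once the umbilic locus is shown to be open and closed.
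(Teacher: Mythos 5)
Your overall strategy coincides with the paper's: work on the open set of non-umbilic points in principal (curvature-line) coordinates, decompose $\Phi=\Phi^\top+\Phi^\perp$ with $\Phi^\perp=(H|\Phi|^2/\alpha)N$ coming from \eqref{eq1}, and play the resulting first-order system for the tangential components $\gamma=\langle\Phi,e_1\rangle$, $\mu=\langle\Phi,e_2\rangle$ against the Codazzi and Gauss equations under the constancy of $K$. The splits into $K=0$ versus $K\neq 0$ and umbilical versus non-umbilical also match (the paper phrases the latter as ``some principal curvature constant'' versus ``neither constant'', which for constant $K$ amounts to the same dichotomy, the cylinder being excluded by the direct computation in the introduction).

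The genuine gap is that the entire analytic core of the argument is replaced by the sentence ``\dots which I expect to be inconsistent unless $\kappa_1=\kappa_2$.'' That expectation \emph{is} the theorem; an overdetermined system has no obligation to be inconsistent, and nothing in your outline identifies the mechanism that actually produces the contradiction. In the paper this mechanism is a specific elimination cascade: solve the last two equations of \eqref{d3} for $\gamma$ and $\mu$ as multiples of $e_1(\kappa_1)$ and $e_2(\kappa_1)$; cross-differentiate and compare with the remaining equations of \eqref{d3} to obtain all second derivatives $e_{ij}(\kappa_1)$; substitute into the Gauss equation \eqref{bb} to get a relation $P_1e_1(\kappa_1)^2+Q_1e_2(\kappa_1)^2+R_1=0$ with coefficients rational in $\kappa_1$, $K$, $|\Phi|^2$; differentiate and eliminate to solve for $e_1(\kappa_1)^2$ and $e_2(\kappa_1)^2$; then two further differentiations successively pin down $|\Phi|^2$ as a rational function of $\kappa_1$ and finally yield a nonzero polynomial in $\kappa_1$ with constant coefficients, which forces $\kappa_1$ constant. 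Each elimination step requires checking that a denominator or leading coefficient (e.g.\ $P_1Q_2-P_2Q_1$, or the coefficient of $|\Phi|^2$) does not vanish identically, and these checks genuinely fail for special parameter values --- $\alpha=\pm 4$ and $\alpha=-3$ --- as well as on the degenerate branch $e_1(\kappa_1)\equiv 0$, each of which needs its own separate argument. Your proposal gives no indication that the proof bifurcates on $\alpha$, nor any fallback if the hoped-for inconsistency does not appear after the first differentiation; the suggested alternative outcome ``$H$ constant'' is likewise not something the computation delivers. As written, the proposal is a plausible plan that reproduces the paper's setup (Section~\ref{s2}) but omits the proof itself (Sections~\ref{s3} and~\ref{s4}).
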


 This theorem will be proved in Sects. \ref{s3} and \ref{s4} because we distinguish the case $K\not=0$ and $K=0$, respectively. 

Furthermore, two more results are proved, where we replace the hypothesis on  the Gauss curvature and  assuming that a principal curvature is constant or that the mean curvature is constant.

\begin{theorem} \label{t22} Planes and spheres are the only  stationary surfaces with a constant principal      curvature. 
\end{theorem}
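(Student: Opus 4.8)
The plan is to show that a stationary surface with a constant principal curvature must be totally umbilic, from which it follows that it is an open piece of a plane or a sphere. Write $\kappa_1,\kappa_2$ for the principal curvatures and suppose $\kappa_1\equiv c$ is constant. The umbilic set is closed, so if the surface is not totally umbilic we may work on a connected open set $U$ of non-umbilic points, and the goal is to reach a contradiction there; once $U$ is shown to be empty, connectedness forces the surface to be totally umbilic, hence a plane or a sphere. On $U$ I would take local coordinates $(u,v)$ whose coordinate curves are lines of curvature, so that the first and second fundamental forms are simultaneously diagonalized, with $\kappa_1=e/E$ and $\kappa_2=g/G$.

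The first key step uses the Codazzi equations, as in the proof of Theorem~\ref{t1}. In principal coordinates they read $\partial_v\kappa_1=\frac{E_v}{2E}(\kappa_2-\kappa_1)$ and $\partial_u\kappa_2=\frac{G_u}{2G}(\kappa_1-\kappa_2)$. Since $\kappa_1=c$ and $\kappa_1\neq\kappa_2$ on $U$, the first equation forces $E_v=0$, so $E=E(u)$ and, after reparametrizing $u$ by arclength, $E\equiv 1$. Computing the Christoffel symbols then yields, along the $u$-curves, $\Phi_{uu}=cN$ together with the Rodrigues relation $N_u=-c\,\Phi_u$. This linear system splits the analysis into two geometric cases. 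If $c=0$ then $\Phi_{uu}=0$ and $N_u=0$, so $U$ is a piece of a developable ruled surface $\Phi(u,v)=R(v)+u\,\xi(v)$ with $|\xi|=1$, whose rulings are lines of curvature and along which the Gauss map is constant. If $c\neq 0$, the system integrates to a circle of radius $1/|c|$ in each $u$-curve, and $U$ is a canal surface, i.e. a tube of constant radius $1/|c|$ about the spine $\beta(v)=\Phi+\frac{1}{c}N$.

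Next I would impose the Euler--Lagrange equation \eqref{eq1} in the form $H\,|\Phi|^2=\alpha\,\langle N,\Phi\rangle$ and read it off along each $u$-curve, where the $u$-dependence is completely explicit. In the ruled case the rulings are orthogonal to $N$, so $\langle N,\Phi\rangle=\langle N,R\rangle$ is independent of $u$, while $|\Phi|^2$ is a quadratic polynomial in $u$ and $H$ is a rational function of $u$; since the right-hand side $\alpha\langle N,R\rangle$ is constant along the ruling, the product $H\,|\Phi|^2$ must be constant in $u$, and analysing this identity forces $\kappa_2\equiv 0$, contradicting non-umbilicity (the borderline $\langle N,R\rangle=0$ gives $H\equiv0$, a plane through the origin). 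In the canal case one computes $\langle N,\Phi\rangle$ and $|\Phi|^2$ as trigonometric functions of $cu$ with coefficients depending on $\beta(v)$, and substitutes the standard expression of $\kappa_2$ for a tube to rewrite the whole equation as an identity in $\cos(cu)$ and $\sin(cu)$; matching the resulting coefficients leaves no non-umbilic solution. In either case $U$ is empty, which proves the theorem.

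The step I expect to be the main obstacle is this last one: controlling the second principal curvature $\kappa_2$ as a function of $u$ and organizing the Euler--Lagrange identity so that the coefficient matching is conclusive. In the canal case $\kappa_2$ is itself a nonconstant function of $u$ governed by the curvature of the spine $\beta$, so the identity mixes the frequencies coming from $\langle N,\Phi\rangle$ and $|\Phi|^2$ with those hidden in $\kappa_2$, and one must separate these modes carefully and rule out accidental cancellations. The developable case is easier but still requires checking the subfamilies (cylinders, cones and tangent developables) in which the quadratic $|\Phi|^2$ or the metric $G$ can degenerate.
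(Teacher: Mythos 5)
Your route is genuinely different from the paper's. The paper never classifies the surfaces geometrically: it stays inside the moving--frame formalism of Section~\ref{s2}, uses the Gauss equation \eqref{bb} and the last equations of \eqref{d3} to produce two competing expressions for $e_{22}(\kappa_1)$, and then by repeatedly differentiating along $e_2$ manufactures polynomial identities in $\kappa_1$ with constant coefficients, forcing $\kappa_1$ to be constant --- a contradiction. You instead classify first and impose the Euler--Lagrange equation afterwards. The classification half of your argument is correct and classical: Codazzi in the form \eqref{b1} gives $E_v=0$, the $u$-curves become unit-speed with $\Phi_{uu}=cN$ and $N_u=-c\Phi_u$, and hence the non-umbilic set is a piece of a developable surface ($c=0$) or of a tube of radius $1/|c|$ about a regular spine $\beta=\Phi+\tfrac1c N$ (regular precisely because the surface is non-umbilic there). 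Restricting \eqref{eq1} to the explicit $u$-curves is then a sound and arguably more transparent strategy than the paper's.

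The gap is exactly where you predict it: the decisive step, namely that $H|\Phi|^2=\alpha\langle N,\Phi\rangle$ restricted to a ruling or to a circle of the tube ``leaves no non-umbilic solution,'' is asserted rather than proved, and it carries the entire content of the theorem. In the developable case the verification is short (along a ruling $|\Phi|^2$ is a monic quadratic in $u$ and $\kappa_2$ is a ratio of a linear by a quadratic, so comparing top-degree coefficients forces $\kappa_2=0$), but it still has to be written, including the cylinder subcase where $G$ does not degenerate. The canal case is substantially harder than the proposal suggests: after clearing the denominator $1-rk\cos\theta$ of $\kappa_2$ one obtains a trigonometric polynomial identity of degree two in $\theta$ whose coefficients involve $|\beta|^2$, $\langle\beta,n\rangle$, $\langle\beta,b\rangle$ and the curvature $k$ of the spine; the constant term already forces $|\beta|^2$ to be constant, so the spine lies on a sphere centred at the origin, and one must then eliminate the surviving subcases (straight spine, i.e.\ a circular cylinder, which is incompatible with $|\beta|^2$ constant; planar circular spine, i.e.\ a torus of revolution, which is killed by the $\cos\theta$-coefficient; the degenerate spine $\beta\equiv0$, which gives a sphere and is umbilic). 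One also needs to handle the points where $k$ vanishes, since the Frenet frame used to define $\langle\beta,n\rangle$ and $\langle\beta,b\rangle$ breaks down there (a parallel normal frame fixes this). Until this coefficient matching is carried out in full, the proof is incomplete; with it, your argument would give a complete and independent proof of Theorem~\ref{t22}.
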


\begin{theorem} \label{t3} Planes and spheres are the only   stationary surfaces with constant mean curvature.   
\end{theorem}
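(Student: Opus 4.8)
The plan is to deduce Theorem~\ref{t3} from Theorem~\ref{t22} by showing that a stationary surface of constant mean curvature automatically carries a constant principal curvature. Write $H\equiv H_0$ for the constant mean curvature and put $c=H_0/\alpha$ (recall $\alpha\neq 0$). Substituting $H=H_0$ into the Euler--Lagrange equation \eqref{eq1} collapses it into the single scalar constraint
\[
\langle N,\Phi\rangle=c\,|\Phi|^2,
\]
relating the support function $\langle N,\Phi\rangle$ to $|\Phi|^2$. I denote by $\Phi^{\top}=\Phi-\langle N,\Phi\rangle N$ the tangential component of the position vector. The strategy is that differentiating this algebraic relation will single out a principal direction with a prescribed, constant principal curvature.

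Concretely, I would record the two first-order identities valid for every tangent vector, $\nabla|\Phi|^2=2\Phi^{\top}$ and $\nabla\langle N,\Phi\rangle=-S(\Phi^{\top})$, where $S$ is the shape operator ($\nabla_X N=-S(X)$); the second identity uses that $S$ is self-adjoint and that $d\Phi(X)=X$ is tangent. Taking the tangential gradient of the constraint and comparing both sides then gives
\[
S(\Phi^{\top})=-2c\,\Phi^{\top}.
\]
Hence at every point where $\Phi^{\top}\neq 0$ the vector $\Phi^{\top}$ is a principal direction with principal curvature equal to the constant $-2c=-2H_0/\alpha$, so on the open set $W=\{\Phi^{\top}\neq 0\}$ the surface has a constant principal curvature.

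It remains to control the locus where $\Phi^{\top}$ vanishes and to globalise. Here I would use that a surface of constant mean curvature is real-analytic, so $\Phi^{\top}$ is a real-analytic tangent field; by the identity principle its zero set is either all of $\Sigma$ or has empty interior. In the first case $\Phi=\langle N,\Phi\rangle N$, whence $|\Phi|^2=\langle N,\Phi\rangle^2$ and the constraint forces $|\Phi|\equiv 1/|c|$, so $\Sigma$ is a sphere centred at the origin. In the second case $W$ is dense, so by continuity of the principal curvatures the value $-2c$ is a principal curvature at every point of $\Sigma$; thus $\Sigma$ has a constant principal curvature and Theorem~\ref{t22} forces it to be a plane or a sphere. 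The borderline case $H_0=0$ falls under this second alternative: then $\langle N,\Phi\rangle\equiv 0$, so $\Phi^{\top}=\Phi$ never vanishes and the constant principal curvature $0$ makes $\Sigma$ a plane.

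I expect the main obstacle to be exactly the degeneration of $\Phi^{\top}$: the differentiated identity only produces a principal curvature on $\{\Phi^{\top}\neq 0\}$, and one must exclude a hybrid surface that is partly a sphere centred at the origin (where the principal curvatures equal $-c\neq -2c$) and partly governed by the eigenvalue $-2c$. Real-analyticity of constant--mean-curvature immersions together with the connectedness of $\Sigma$ is what rules this out cleanly; once a single constant principal curvature is established on all of $\Sigma$, the already-proved Theorem~\ref{t22} (equivalently, the classification of isoparametric surfaces in $\r^3$ and the fact that circular cylinders are never stationary) completes the proof.
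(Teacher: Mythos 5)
Your proposal is correct, and its core step is exactly the paper's: differentiating the constraint $H_0|\Phi|^2=\alpha\langle N,\Phi\rangle$ gives $(2H_0+\alpha\kappa_i)\langle e_i,\Phi\rangle=0$, i.e.\ $S(\Phi^{\top})=-(2H_0/\alpha)\,\Phi^{\top}$, so wherever $\Phi^{\top}\neq0$ some principal curvature equals the constant $-2H_0/\alpha$. The paper stops there and reads off an immediate contradiction with the standing assumption that neither principal curvature is constant on the subdomain where $\Phi^{\top}\neq0$, whereas you globalise via real-analyticity and then invoke Theorem~\ref{t22}; both finishes are valid, yours merely routing the conclusion through the (much longer) proof of Theorem~\ref{t22} instead of closing the argument directly.
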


The proofs will be done in Sects. \ref{s5} and \ref{s6} respectively. The techniques use  the Gauss and Codazzi equations in local coordinates of a surface given by lines of curvature. This will be introduced in Sect. \ref{s2}. In some steps during  the proofs, it will be  required long and hard computations of derivatives as well as  manipulations with polynomials. In this paper we employ {\sc Mathematica}$^\copyright$ to compute these symbolic calculations \cite{wo}.

\section{Preliminaries}\label{s2}

In this section we show local computations of the curvature on a surface when the coordinates are given by lines of curvature in open sets   free of umbilic points. The next calculations and results hold for the proofs of all results of this paper.

Let $\Sigma$ be an orientable surface immersed in $\r^3$. Denote by  $\nabla$ and $\overline{\nabla}$ be the Levi-Civita connections of $\Sigma$ and
$\mathbb{R}^3$, respectively.  Denote by $N$ the Gauss map of $\Sigma$. Let $\mathfrak{X}(\Sigma)$ be the space of tangent vector fields of $\Sigma$. The  Gauss and Weingarten formulas are, respectively, 
\begin{equation}\label{gw}  
\begin{split}
    \overline{\nabla}_XY&=\nabla_XY+h(X,Y), \\
    \overline{\nabla}_XN&=-SX,
    \end{split}
\end{equation}
for all $X,Y\in\mathfrak{X}(\Sigma)$, where $h\colon T\Sigma\times T\Sigma\to T^\perp\Sigma$ and  $S\colon T\Sigma\to  T\Sigma$ are the second fundamental form and the shape
operator, respectively. The  Gauss
equation and Codazzi equations are 
\begin{equation*}
\begin{split}
    \langle R(X,Y)W,Z\rangle&=\langle h(Y,W),h(X,Z)\rangle-\langle
    h(X,W),h(Y,Z)\rangle\\
    (\nabla_XS)Y&=(\nabla_YS)X,
    \end{split}
\end{equation*}
where $R$ is the curvature tensor of $\nabla$. The   mean curvature  $H$ and the Gauss curvature $K$ are defined by
\[   H=\kappa_1+\kappa_2,\qquad K=\kappa_1\kappa_2,\]
where  $\kappa_1$ and $\kappa_2$ are the principal curvatures of $\Sigma$.

In an open set $\Omega\subset\Sigma$ of non-umbilic points, consider a local
orthonormal frame $\{e_1,e_2\}$ by line of curvature coordinates: see for example,  \cite[Ch. 3]{pt}. The expression of the shape operator $S$ is 
\begin{equation}\label{b0}
\left\{
\begin{split}
Se_1&=\kappa_1 e_1\\
Se_2&=\kappa_2 e_2.
\end{split}
\right.
    \end{equation}
Since $\{e_1,e_2\}$ is an orthonormal frame, if $X\in\mathfrak{X}(\Sigma)$ then
\begin{equation*}
\left\{
\begin{split}
\nabla_X e_1&=\omega(X)e_2\\
 \nabla_X e_2&=-\omega(X)e_1,
\end{split}
\right.
\end{equation*}
where $\omega(X)=\langle\nabla_X e_1,e_2\rangle=-\langle\nabla_X e_2,e_1\rangle$. Then we have 
\begin{equation}\label{eiej}
\left\{
\begin{split}
&\nabla_{e_1}e_1=\omega(e_1)e_2,\quad \nabla_{e_1}e_2=-\omega(e_1)e_1\\
&\nabla_{e_2}e_1=\omega(e_2)e_2,\quad \nabla_{e_2}e_2=-\omega(e_2)e_1.
\end{split}\right.
\end{equation}
Using the Codazzi equations, we have  
\begin{equation}\label{b1}
    \left\{
    \begin{array}{ll}
        e_{2}(\kappa_1)=(\kappa_1-\kappa_2)\omega(e_1)\\
        e_{1}(\kappa_2)=(\kappa_1-\kappa_2)\omega(e_2),
    \end{array}
    \right.
\end{equation}
where we stand for $e_i(f)$ the derivative of a smooth function $f\in C^\infty(\Omega)$ along the principal directions $e_i$, $i=1,2$.  In the set $\Omega$, we have $\kappa_1-\kappa_2\not=0$. The  Gauss equation  gives the following expression of the Gauss curvature  $K$,  
\begin{equation}\label{bb}
   K=\kappa_1\kappa_2=-e_1\left(\frac{e_1( \kappa_2 )}{\kappa_1-\kappa_2} \right)+e_2\left(\frac{e_2( \kappa_1 )}{\kappa_1 -\kappa_2}\right)
   -\frac{e_1( \kappa_2)^2+ e_2( \kappa_1 )^2}{(\kappa_1 -\kappa_2)^2}.
\end{equation}
 Denote by $\Phi\colon\Sigma\to\r^3$ the immersion of stationary surface. Let  decompose the position vector $\Phi$ in its tangent  $\Phi^\top$ and normal part $\Phi^\perp$ with respect to $\Sigma$. Let 
\begin{equation*}
\Phi^\top=\gamma e_1+\mu e_2,
\end{equation*}
for some smooth functions $\gamma$ and $\mu$. Notice that 
$$\gamma=\langle \Phi,e_1\rangle=\frac12e_1(|\Phi|^2),\quad \mu=\langle\Phi,e_2\rangle=\frac12e_2(|\Phi|^2).$$
 Analogously, the normal part  is  $\Phi^\perp=\langle N,\Phi\rangle N$, which by the stationary surface equation \eqref{eq1}, is
$$\Phi^{\perp}=\frac{H|\Phi|^2}{\alpha}N.$$
\begin{lemma}\label{l2}
Let $\Omega$ be a subset of non-umbilic points of an $\alpha$-stationary surface $\Sigma$. Then the functions   $\gamma$ and $\mu$ satisfy the following equations
    \begin{equation}\label{d3}
    \left\{
\begin{split}
            e_{1}(\gamma)&=\mu\omega(e_1)+\frac{H|\Phi|^2}{\alpha}\kappa_1+1\\
            e_{2}(\gamma)&=\mu\omega(e_2)\\
            e_{1}(\mu)&=-\gamma\omega(e_1)\\
            e_{2}(\mu)&=-\gamma\omega(e_2)+\frac{H|\Phi|^2}{\alpha}\kappa_2+1\\
            0&=e_{1}(\frac{H|\Phi|^2}{\alpha})+\gamma\kappa_1\\
            0&=e_{2}(\frac{H|\Phi|^2}{\alpha})+\mu\kappa_2.
        \end{split}\right.
    \end{equation}
\end{lemma}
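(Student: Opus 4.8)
The plan is to obtain all six relations in \eqref{d3} by differentiating the position vector directly and matching components in the moving frame $\{e_1,e_2,N\}$. The single input I need beyond the structure equations already recorded is that, for the identity immersion, the ambient covariant derivative of the position vector reproduces the direction of differentiation: $\overline{\nabla}_X\Phi=X$ for every $X\in\mathfrak{X}(\Sigma)$. This is immediate because $d\Phi$ is just the inclusion $T\Sigma\hookrightarrow\r^3$. Writing $\lambda=H|\Phi|^2/\alpha$ for the normal coefficient supplied by \eqref{eq1}, the full decomposition of $\Phi$ reads $\Phi=\gamma e_1+\mu e_2+\lambda N$, and the whole lemma amounts to computing $\overline{\nabla}_{e_1}\Phi$ and $\overline{\nabla}_{e_2}\Phi$ from this expression and comparing with $e_1$ and $e_2$.

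First I would differentiate along $e_1$. Using the Gauss and Weingarten formulas \eqref{gw}, the connection identities \eqref{eiej}, and orthonormality of the frame (so that $h(e_i,e_j)=\kappa_i\delta_{ij}N$, hence $\overline{\nabla}_{e_1}e_1=\omega(e_1)e_2+\kappa_1 N$, $\overline{\nabla}_{e_1}e_2=-\omega(e_1)e_1$, and $\overline{\nabla}_{e_1}N=-\kappa_1 e_1$), the Leibniz rule gives
\begin{equation*}
\overline{\nabla}_{e_1}\Phi=\bigl(e_1(\gamma)-\mu\omega(e_1)-\lambda\kappa_1\bigr)e_1+\bigl(\gamma\omega(e_1)+e_1(\mu)\bigr)e_2+\bigl(\gamma\kappa_1+e_1(\lambda)\bigr)N.
\end{equation*}
Setting the left-hand side equal to $e_1$ and equating the $e_1$, $e_2$ and $N$ coefficients produces precisely the first, third and fifth equations of \eqref{d3}.

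An entirely parallel computation along $e_2$, now using $\overline{\nabla}_{e_2}e_1=\omega(e_2)e_2$, $\overline{\nabla}_{e_2}e_2=-\omega(e_2)e_1+\kappa_2 N$ and $\overline{\nabla}_{e_2}N=-\kappa_2 e_2$, expresses $\overline{\nabla}_{e_2}\Phi$ in the frame $\{e_1,e_2,N\}$; equating it to $e_2$ and comparing the three components yields the second, fourth and sixth equations. There is no genuine obstacle in this lemma: the content is a careful bookkeeping of the structure equations, and the only point that deserves attention is that $h(e_1,e_2)=0$, which holds because the line-of-curvature frame diagonalizes the shape operator. This vanishing is exactly what decouples the tangential and normal parts and makes the coefficient comparison close cleanly into the stated system.
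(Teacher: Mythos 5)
Your proposal is correct and follows essentially the same route as the paper: both start from $\overline{\nabla}_X\Phi=X$, use the decomposition of $\Phi$ into tangential part $\gamma e_1+\mu e_2$ and normal part $(H|\Phi|^2/\alpha)N$ coming from \eqref{eq1}, and apply the Gauss--Weingarten formulas together with \eqref{eiej} and the diagonal shape operator \eqref{b0} to match coefficients in $\{e_1,e_2,N\}$. Expanding the frame derivatives all at once rather than first splitting abstractly into tangent and normal parts, as the paper does, is only a cosmetic difference.
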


\begin{proof} 
 If $X\in\mathfrak{X}(\Sigma)$, then   $\overline{\nabla}_X\Phi=X$. The Gauss and
Weingarten formulas \eqref{gw} imply 
\begin{equation*}
\begin{split}
    X&=\overline{\nabla}_{X}\Phi=\overline{\nabla}_{X}(\Phi^{\top}+\Phi^{\perp})=\overline{\nabla}_{X} \Phi^{\top}+\overline{\nabla}_X\Phi^{\perp}\\
    &={\nabla}_{X}\Phi^{\top}+h(X,\Phi^{\top})+X( H\frac{|\Phi|^2}{\alpha})N- H\frac{|\Phi|^2}{\alpha} SX.
\end{split}
\end{equation*}
Taking the tangent and the normal part of the above expression, we obtain, respectively,
\begin{equation}\label{nnn} 
    \left\{
    \begin{array}{ll}
        {\nabla}_{X}\Phi^{\top}-H\frac{|\Phi|^2}{\alpha}SX=X\\
        h(X, \Phi^{\top})+X(H\frac{|\Phi|^2}{\alpha})N=0.
    \end{array}
    \right.
\end{equation}
Equations \eqref{d3} are obtained by substituting in \eqref{nnn} $X$ by $e_1$ and   $e_2$ in the previous two identities and, next, writing the  coordinates with respect to $\{e_1,e_2,N\}$. In the case $X=e_1$, and using \eqref{eiej}, we have
\begin{equation*} 
    \begin{array}{ll}
       e_1&=  {\nabla}_{e_1}(\gamma e_1+\mu e_2)-H\frac{|\Phi|^2}{\alpha}\kappa_1 e_1\\
     & =e_1(\gamma)e_1+e_1(\mu)e_2+\gamma \omega(e_1)e_2-\mu\omega(e_1)e_1-H\frac{|\Phi|^2}{\alpha}\kappa_1 e_1.
     \end{array}
\end{equation*}
\begin{equation*} 
    \begin{array}{ll}
       0&= h(e_1,\gamma e_1+\mu e_2)+e_1(H\frac{|\Phi|^2}{\alpha})N\\
       &=\left(\gamma\kappa_1+e_1\left(\frac{ H|\Phi|^2}{\alpha}\right)\right)N.
    \end{array}
\end{equation*}
This gives the first, third and fifth equations in \eqref{d3}. The other three equations of \eqref{d3} are obtained by putting $X=e_2$ into \eqref{nnn}.
\end{proof}

\section{Proof of Theorem \ref{t1}: case $K\not=0$}\label{s3}

Let $\Sigma$ be an $\alpha$-stationary surface and suppose that $K$ is constant with $K\not=0$. 
If a principal curvature is constant, then the other one is also constant because $K$ is constant and $K\not=0$. Then $\Sigma$ is an isoparametric surface, hence  $\Sigma$ is a   sphere.  This proves the theorem in this situation. 

From now we suppose that both principal curvatures are not constant and we will arrive to a contradiction. We express $\kappa_2$, $H$ and $\kappa_1-\kappa_2$ in terms of $K$ and $\kappa_1$. We have 
\begin{equation}\label{gm2}
\begin{split}
\kappa_2&=\frac{K}{\kappa_1},\\
H&=\frac{\kappa_1^2+K}{\kappa_1},\\
\kappa_1-\kappa_2&=\frac{\kappa_1^2-K}{\kappa_1}.
\end{split}
\end{equation}
Using these identities and the fact that $K$ is constant, the expression of $K$ in \eqref{bb} can be written as 
 \begin{equation}\label{bb2}
  \begin{split}
   K\frac{(\kappa_1^2-K)^2}{\kappa_1^2}&=\frac{K(\kappa_1^2-K)}{\kappa_1^3}e_{11}(\kappa_1) +\frac{\kappa_1^2-K}{\kappa_1}e_{22}(\kappa_1)\\
   &-\frac{3K}{\kappa_1^2}e_1(\kappa_1)^2-\frac{2\kappa_1^2+K}{\kappa_1^2}e_2(\kappa_1)^2.
   \end{split}
\end{equation}

By using the expressions of $H$ and $\kappa_2$ in \eqref{gm2},  we have
\begin{equation}\label{gm22}
\begin{split}
e_1(\frac{H|\Phi|^2}{\alpha})&= \frac{\kappa_1^2-K}{\alpha\kappa_1^2}|\Phi|^2e_1(\kappa_1)+2\gamma\frac{\kappa_1^2+K}{\alpha \kappa_1},\\
e_2(\frac{H|\Phi|^2}{\alpha})&= \frac{\kappa_1^2-K}{\alpha\kappa_1^2}|\Phi|^2e_2(\kappa_1)+2\mu\frac{\kappa_1^2+K}{\alpha \kappa_1}.\\
\end{split}
\end{equation}
Substituting into  the last two equations of \eqref{d3}, and  using \eqref{gm2} and \eqref{gm22}, we have
\begin{equation*}
\begin{split}
 0&=\frac{\kappa_1^2-K}{\alpha\kappa_1^2}|\Phi|^2e_1(\kappa_1)+\left(2\frac{\kappa_1^2+K}{\alpha \kappa_1} + \kappa_1\right)\gamma,\\
0 &= \frac{\kappa_1^2-K}{\alpha\kappa_1^2}|\Phi|^2e_2(\kappa_1)+\left(2\frac{\kappa_1^2+K}{\alpha \kappa_1}+ \frac{K}{\kappa_1}\right)\mu.\\
\end{split}
\end{equation*}
From both identities, we get $\gamma$ and $\mu$, 
\begin{equation}\label{gm}
\begin{split}
 \gamma&=\frac{|\Phi|^2(K-\kappa_1^2)}{2\kappa_1 K+(\alpha +2)\kappa_1^3}e_1(\kappa_1) ,\\
\mu&=\frac{|\Phi|^2(K-\kappa_1^2)}{2\kappa_1^3+(\alpha +2)\kappa_1 K}e_2(\kappa_1).\\
\end{split}
\end{equation}
We point out that the denominators of \eqref{gm} cannot vanish identically because in such a case, $\kappa_1$ would be a constant function, which was initially discarded. This argument also holds in the reasonings in all the denominators of the next expressions.

 Using these above expressions of $\gamma$ and $\mu$, we differentiate with respect to $e_1$ and $e_2$. Later, we will compare with the first equations in \eqref{d3} and, hence, we will get the expressions of $e_{ij}(\kappa_1)$ in terms of $e_1(\kappa_1)$ and $e_2(\kappa_1)$.  We differentiate $\gamma$ and $\mu$ given in \eqref{gm} with respect to $e_1$ and $e_2$. Notice that  $e_1(|\Phi|^2)= 2\gamma$ and  $e_2(|\Phi|^2)= 2\mu$. Then we have
\begin{equation}\label{d4}
\begin{split}
 e_1(\gamma)&=\frac{|\Phi|^2(K-\kappa_1^2)}{2\kappa_1 K+(\alpha +2)\kappa_1^3}e_{11}(\kappa_1)+\frac{(\alpha +4)   |\Phi|^2 \left(\kappa_1^2-3 K\right)}{\left((\alpha +2) \kappa_1^2+2 K\right)^2}e_1(\kappa_1)^2\\ 
 e_1(\mu)&= \frac{|\Phi|^2(K-\kappa_1^2)}{2\kappa_1^3+(\alpha +2)\kappa_1 K}e_{12}(\kappa_1)+\frac{(\alpha +4) |\Phi|^2 \left(\kappa_1^2+K\right) \left(2 k^2-(\alpha +6) K\right)}{\left((\alpha +2)\kappa_1^2+2 K\right) \left(2 \kappa_1^2+(\alpha +2) K\right)^2}e_1(\kappa_1)e_2(\kappa_1)\\
 e_2(\mu)&=\frac{|\Phi|^2(K-\kappa_1^2)}{2\kappa_1^3+(\alpha +2)\kappa_1 K}e_{22}(\kappa_1)+\frac{  |\Phi|^2 \left(4 \kappa_1^4-(\alpha +12) \kappa_1^2 K-\alpha  K^2\right)}{\left(2 \kappa_1^3+(\alpha +2) \kappa_1 K\right)^2}e_2(\kappa_1)^2.
 \end{split}
 \end{equation}
We work on \eqref{d3}. Now \eqref{b1} gives, in combination with \eqref{gm2}, 
\begin{equation*}
\begin{split}
\omega(e_1)&=\frac{\kappa_1}{\kappa_1^2-K}e_2(\kappa_1),\\
\omega(e_2)&=-\frac{K}{\kappa_1(\kappa_1^2-K)}e_1(\kappa_1).\\
\end{split}
\end{equation*}
Then the first equations of \eqref{d3} become
    \begin{equation}\label{d5}
    \left\{
\begin{split}
e_{1}(\gamma)&=\mu\frac{\kappa_1}{\kappa_1^2-K}e_2(\kappa_1)+\frac{\kappa_1^2+K}{\alpha }|\Phi|^2  +1\\
 e_{1}(\mu)&=-\gamma\frac{\kappa_1}{\kappa_1^2-K}e_2(\kappa_1)\\
 e_{2}(\mu)&=\gamma \frac{K}{\kappa_1(\kappa_1^2-K)}e_1(\kappa_1)+\frac{\kappa_1^2+K}{\alpha\kappa_1^2}|\Phi|^2 K+1,
      \end{split}\right.
   \end{equation}
where we have used \eqref{gm2} again.  From \eqref{d4} and \eqref{d5}, we obtain the expressions of $e_{11}(\kappa_1)$, $e_{12}(\kappa_1)$ and $e_{22}(\kappa_1)$, namely,

\begin{equation}\label{d7}
\begin{split}
e_{11}(\kappa_1)&=\frac{(\alpha +4) \kappa_1 \left(\kappa_1^2-3 K\right)}{\left(\kappa_1^2-K\right) \left((\alpha +2) \kappa_1^2+2 K\right)}e_1(\kappa_1)^2+\frac{(\alpha +2) \kappa_1^3+2 \kappa_1 K}{\left(\kappa_1^2-K\right) \left(2 \kappa_1^2+(\alpha +2) K\right)}e_2(\kappa_1)^2\\
&-\frac{\kappa_1 \left((\alpha +2) \kappa_1^2+2 K\right) \left(\alpha +|\Phi|^2 \left(\kappa_1^2+K\right)\right)}{\alpha  |\Phi|^2 \left(\kappa_1^2-K\right)},\\
e_{12}(\kappa_1)&=\left( \kappa_1  \left(\frac{4}{2 \kappa_1^2+(\alpha +2) K}+\frac{3}{K-\kappa_1^2}\right)-\frac{4 K}{\kappa_1((\alpha +2) \kappa_1^2+2 K)}+\frac{2}{\kappa_1}\right)e_1(\kappa_1)e_2(\kappa_1),\\
e_{22}(\kappa_1)&=\frac{K \left(2 \kappa_1^3+(\alpha +2) \kappa_1 K\right)}{\left(\kappa_1^2-K\right) \left((\alpha +2) \kappa_1^4+2 \kappa_1^2 K\right)}e_1(\kappa_1)^2-\frac{-4 \kappa_1^4+(\alpha +12) \kappa_1^2 K+\alpha  K^2}{\left(\kappa_1^2-K\right) \left(2 \kappa_1^3+(\alpha +2) \kappa_1 K\right)}e_2(\kappa_1)^2\\
&-\frac{\left(2 \kappa_1^2+(\alpha +2) K\right) \left(|\Phi|^2 K \left(\kappa_1^2+K\right)+\alpha  \kappa_1^2\right)}{\alpha  |\Phi|^2 \kappa_1 \left(\kappa_1^2-K\right)}.
\end{split}
\end{equation}
We distinguish the   that $e_1(\kappa_1)=0$ identically and $e_1(\kappa_1)\not=0$.
\begin{enumerate}
\item Case  $e_1(\kappa_1)=0$ identically. Then $e_{11}(\kappa_1)=0$ identically and the first equation in \eqref{d7} gives 
\begin{equation}\label{E1}
e_2(\kappa_1)^2=\frac{\left(2 \kappa_1^2+(\alpha +2) K\right) \left(\alpha +|\Phi|^2 \left(\kappa_1^2+K\right)\right)}{\alpha  |\Phi|^2}.
\end{equation}
Notice that $e_2(\kappa_1)\not=0$ because $\kappa_1$ is not a constant function. 
Differentiating \eqref{E1} with respect to $e_2$, and simplifying by $e_2(\kappa_1)$, we have
$$e_{22}(\kappa_1)=\left(\frac{3 \kappa_1^2-K}{\kappa_1|\Phi|^2}+\frac{4 \kappa_1^3+  (\alpha +4) \kappa_1 K}{\alpha }\right).$$
On the other hand, the last equation in \eqref{d7} is now 
\begin{equation*}
\begin{split}
e_{22}(\kappa_1)&=\frac{4 \kappa_1^4-(\alpha +12) \kappa_1^2 K-\alpha  K^2}{\left(\kappa_1^2-K\right) \left(2 \kappa_1^3+(\alpha +2) \kappa_1 K\right)}e_2(\kappa_1)^2 \\
 &-\frac{\left(2 \kappa_1^2+(\alpha +2) K\right) \left(|\Phi|^2 K \left(\kappa_1^2+K\right)+\alpha  \kappa_1^2\right)}{\alpha  |\Phi|^2 \kappa_1 \left(\kappa_1^2-K\right)}
 \end{split}
 \end{equation*}
 because $e_1(\kappa_1)=0$ identically. Equating the two above expressions of $e_{22}(\kappa_1)$, and using the value of $e_{2}(\kappa_1)^2$ in \eqref{E1}, we get 
\begin{equation*}
\begin{split}
0&=2 |\Phi|^2 K \left((\alpha +5) \kappa_1^4+(\alpha +6) \kappa_1^2 K+(\alpha +1) K^2\right)\\
&+\alpha  \left(\kappa_1^4+2 (\alpha +5) \kappa_1^2 K+(\alpha +1) K^2\right). 
\end{split}
 \end{equation*}
 If the coefficient of $|\Phi|^2$ in this equation is $0$, then $(\alpha +5) \kappa_1^4+(\alpha +6) \kappa_1^2 K+(\alpha +1) K^2=0$. Since the coefficients are constant and they do not vanish identically, then $\kappa_1$ would be a constant function, which it is not possible. Therefore, we get  an expression for $|\Phi|^2$, namely, 
\begin{equation}\label{d10}
|\Phi|^2=-\frac{\alpha  \left(\kappa_1^4+2 (\alpha +5) K\kappa_1^2 +(\alpha +1) K^2\right)}{2 K \left((\alpha +5) \kappa_1^4+(\alpha +6) \kappa_1^2 K+(\alpha +1) K^2\right)}.
\end{equation}
Notice that we are using $K\not=0$. Differentiating with respect to $e_2$ and because $e_2(|\Phi|^2)=2\mu$, we obtain after some manipulations together the expression of $e_2(\kappa_1)^2$ given in \eqref{E1}, a polynomial on $\kappa_1$ of degree $10$ with constant coefficients, namely,  
\begin{equation*}
\begin{split}
0&=- (\alpha +5) \kappa_1^{10} +(2 \alpha  (\alpha +9)+37) K\kappa_1^8 +2 (\alpha  (\alpha  (\alpha +13)+47)+47) K^2\kappa_1^6 \\
&+2 (\alpha  (\alpha  (\alpha +6)+14)+29) K^3\kappa_1^4 -(\alpha +1) (\alpha  (\alpha +4)-7) K^4\kappa_1^2 \\
&+(\alpha +1)^2 K^5.
\end{split}
\end{equation*}
Since the coefficients of this polynomial cannot vanish mutually, this implies that $\kappa_1$ is a constant, obtaining a contradiction.
 \item Case $e_1(\kappa_1)\not=0$.
 We substitute \eqref{d7} in \eqref{bb2}, obtaining
\begin{equation}\label{pe1}
P_1e_1(\kappa_1)^2+Q_1e_2(\kappa_1)^2+R_1=0,
\end{equation}
where
\begin{equation*}
\begin{split}
P_1&=\frac{K \left(\alpha  \kappa_1^2+(\alpha +8) K\right)}{(\alpha +2) \kappa_1^4+2 k^2 K},\\
Q_1&=\frac{K \left((\alpha +8) \kappa_1^2+\alpha  K\right)}{2 \kappa_1^4+(\alpha +2) \kappa_1^2 K},\\
R_1&=  \frac{\alpha  \kappa_1^2 K+\left(\kappa_1^2+K\right)^2}{\kappa_1^2|\Phi|^2}+\frac{K}{\kappa_1^2} \left(\kappa_1^4+\frac{2 \left(\kappa_1^2+K\right)^2}{\alpha }+K^2\right) .
\end{split}
\end{equation*}
We differentiate \eqref{pe1} with respect to $e_1$, 
$$e_1(P_1)e_1(\kappa_1)^2+e_1(Q_1)e_2(\kappa_1)^2+e_1(R_1)+2P_1e_1(\kappa_1)e_{11}(\kappa_1)+2Q_1 e_2(\kappa_1)e_{12}(\kappa_1)=0.$$
Simplifying by $e_1(\kappa_1)$ because $e_1(\kappa_1)\not=0$, we obtain an equation of type
\begin{equation}\label{pe2}
P_2e_1(\kappa_1)^2+Q_2e_2(\kappa_1)^2+R_2=0,
\end{equation}
where
\begin{equation*}
\begin{split}
P_2&=2K(\kappa_1^6-3\kappa_1^4 K-4K^2\kappa_1^2+2K^3 ),\\
Q_2&= 2K^2(6\kappa_1^4+4K  \kappa_1^2+K^2),\\
R_2&=-\frac{\kappa_1^2 \left(\kappa_1^8 +K^4+2\kappa_1^2 K^3(4-3K|\Phi|^2)-2\kappa_1^4K^2(1+K|\Phi|^2)\right)}{|\Phi|^2}.
\end{split}
\end{equation*}
We solve Eqs. \eqref{pe1} and \eqref{pe2} for $e_1(\kappa_1)^2$ and $e_2(\kappa_1)^2$. First we need to discuss the case $P_1Q_2-P_2 Q_1=0$. We have  
$$P_1Q_2-P_2 Q_1=\frac{4 \alpha  \left(16-\alpha ^2\right) K^2 \left(\kappa_1^2-K\right)^4}{\left((\alpha +2) \kappa_1^2+2 K\right)^2 \left(2 \kappa_1^2+(\alpha +2) K\right)^2}.$$
Since $\kappa_1$ is not constant and $K\not=0$, the discussion is   $\alpha^2-16=0$ and $\alpha^2-16\not=0$.
\begin{enumerate}
\item Case $\alpha^2=16$. Suppose $\alpha=-4$ and the argument for $\alpha=4$ is similar. Combining Eqs. \eqref{pe1} and \eqref{pe2} by means of $P_2\eqref{pe1}-P_1 \eqref{pe2}=0$, we obtain 
$$2 \kappa_1^2 K \left(2 |\Phi|^2 K-3\right)-3 \kappa_1^4+K^2=0.$$
This gives 
$$|\Phi|^2=\frac{1}{4} \left(\frac{3 \kappa_1^2}{K^2}-\frac{1}{\kappa_1^2}+\frac{6}{K}\right).$$
Differentiating with respect to $e_1$, and using $e_1(|\Phi|^2)=2\gamma$ together the value of $\gamma$ in \eqref{gm}, we obtain 
$\kappa^2-K=0$, which implies that   $\kappa_1$ is constant. This is  a contradiction.
\item Case $\alpha^2 \not=16$. From Eqs. \eqref{pe1} and \eqref{pe2}, we obtain $e_1(\kappa_1)^2$ and $e_2(\kappa_2)^2$. The value of $e_1(\kappa_1)^2$ is 
\begin{equation}\label{E2}
e_1(\kappa_1)^2:=M_1=\frac{2K+(\alpha +2)\kappa_1^2}{2\alpha^2(16-\alpha^2)|\Phi|^2\kappa_1^2(\kappa_1^2-K)^3}M_2,
\end{equation}
where
\begin{equation*}
\begin{split}
M_2&=2 |\Phi|^2 \kappa_1^2 K \Big(\left(\alpha ^3+2 \alpha ^2-16 \alpha -32\right) \kappa_1^8-2 \left(2 \alpha ^3+11 \alpha ^2+32 \alpha +48\right) \kappa_1^6 K\\
&+\left(7 \alpha ^3+26 \alpha ^2+16 \alpha -96\right) \kappa_1^4 K^2+2 \left(\alpha ^4+10 \alpha ^3+39 \alpha ^2+32 \alpha -16\right) \kappa_1^2 K^3\\
&+\alpha ^2 \left(\alpha ^2+8 \alpha +12\right) K^4\Big)\\
&+\alpha  \Big(-6 (3 \alpha +8) \kappa_1^{10}+\left(3 \alpha ^3-\alpha ^2-110 \alpha -256\right) \kappa_1^8 K\\
&+\left(\alpha ^4+\alpha ^3-54 \alpha ^2-236 \alpha -352\right) \kappa_1^6 K^2-\left(3 \alpha ^3+32 \alpha ^2+148 \alpha +128\right) \kappa_1^4 K^3\\
&-\left(\alpha ^3+10 \alpha ^2+2 \alpha -16\right) \kappa_1^2 K^4+\alpha  (\alpha +2) K^5\Big).
\end{split}
\end{equation*}
We omit the expression of $e_2(\kappa_1)^2$. We   explain the argument that allows to get a contradiction.  In the expression \eqref{E2} of $e_1(\kappa_1)^2$, we differentiate with respect to $e_1$, obtaining 
\begin{equation}\label{efinal}
2e_1(\kappa_1)e_{11}(\kappa_1)=\frac{d}{\kappa_1}(M_1)e_1(\kappa_1).
\end{equation}
In this equation, we simplify by $e_1(\kappa_1)$. We employ the expressions of $e_1(\kappa_1)^2$ and $e_2(\kappa_1)^2$ in       \eqref{efinal} obtaining finally  
\begin{equation*}
\begin{split}
0&=2 \kappa_1^4 K^2 \left(2 (\alpha +3) (\alpha +4) |\Phi|^2 K-(\alpha +1) (\alpha +7)\right)\\
&+(\alpha +1) (\alpha +3) \kappa_1^8-2 (\alpha +1) (\alpha +2) \kappa_1^6 K+2 (\alpha -2) \kappa_1^2 K^3+3 K^4.
\end{split}
\end{equation*}
As in the case $\alpha^2=16$, we now get $|\Phi|^2$ from this identity. The coefficient of $|\Phi|^2$ is $4 (\alpha +3) (\alpha +4) \kappa_1^4 K^3$. If $\alpha=-3$ (notice that $\alpha\not=-4$), then we have 
$$4 \kappa_1^6-16 \kappa_1^4 K+10 \kappa_1^2 K^2-3 K^3=0.$$
 This proves that $\kappa_1$ is constant and this is a contradiction. If $\alpha\not=-3$, then
$$|\Phi|^2:=M_3=\frac{M_4}{4 (\alpha +3) (\alpha +4) \kappa_1^4 K^3},$$
where
\begin{equation*}
\begin{split}
M_4&=- \left(\alpha ^2+4 \alpha +3\right) \kappa_1^8-2 \left(\alpha ^2+3 \alpha +2\right) \kappa_1^6 K\\
&-2 \left(\alpha ^2+8 \alpha +7\right) \kappa_1^4 K^2+2 (\alpha -2) \kappa_1^2 K^3+3 K^4.
\end{split}
\end{equation*}
Again, we differentiate with respect to $e_1$ and we use $e_1(|\Phi|^2)=2\gamma$, obtaining 
$$2\gamma=\frac{d}{\kappa_1}(M_3) e_1(\kappa_1).$$
This gives a polynomial of degree $8$, 
\begin{equation*}
\begin{split}
0&=(\alpha +1) (\alpha +3) (2 \alpha +5) \kappa_1^8+4 (\alpha +1) (\alpha +3)K \kappa_1^6 -6 (\alpha +1)K^2 \kappa_1^4 \\
&+12 (\alpha +1) K^3\kappa_1^2 +15 K^4.
\end{split}
\end{equation*}
Since the coefficients are constant and not mutually zero, we deduce that $\kappa_1$ is a constant function, obtaining a contradiction. This completes the proof of Thm. \ref{t1}.
\end{enumerate}
 
 \end{enumerate}

\section{Proof of Theorem \ref{t1}: case $K=0$}\label{s4}

Let $\Sigma$ be an $\alpha$-stationary surface with $K=0$.   Without loss of generality, assume   $\kappa_2=0$. If $\kappa_1$ is constant,  then the surface is isoparametric, hence $\Sigma$ is a plane because $K=0$. This proves the result in this situation.
 
From now on, we suppose that $\kappa_1$ is not constant and we will arrive to a contradiction. Now the expression of $\mu$ is \begin{equation}\label{gm4}
\mu=-\frac{|\Phi|^2}{2\kappa_1}e_2(\kappa_1).
\end{equation}
  From \eqref{b1} and   \eqref{d3}, we obtain $\omega(e_2)=0$  and $e_2(\mu)=1$.   On the other hand, from \eqref{gm4}, we have 
\begin{equation*}
e_2(\mu)=-\frac{|\Phi|^2}{2\kappa_1}e_{22}(\kappa_1)+\frac{|\Phi|^2}{\kappa_1^2}e_2(\kappa_1)^2.
  \end{equation*}
  Since $e_2(\mu)=1$, then
  $$e_{22}(\kappa_1)=\frac{2}{\kappa_1} e_2(\kappa_1)^2-\frac{2 \kappa_1}{|\Phi|^2}.$$
 Equation \eqref{bb} becomes 
  $$e_{22}(\kappa_1)=\frac{2}{\kappa_1}e_2(\kappa_1)^2.$$
Combination the last two expressions of $e_{22}(\kappa_1)$, we obtain $\kappa_1=0$, which it is a contradiction.

\section{Proof of Theorem \ref{t22}}\label{s5}
 
 Let $\Sigma$ be an $\alpha$-stationary surface with a constant principal curvature.    Without loss of generality, we assume   $\kappa_2=c$. If $c=0$, then Thm. \ref{t1} proves that $\Sigma$ is a plane. 
 
 Suppose now $c\not=0$. If the other principal curvature $\kappa_1$ is constant,  then the surface is isoparametric, hence $\Sigma$ is a plane or a sphere. This proves the result in this situation. 
 
 We now assume that $\kappa_1$ is not constant and we will obtain a contradiction. The analogue identities \eqref{gm2} are  
 \begin{equation*}
 \begin{split}
 K&=\frac{c}{\kappa_1}\\
 H&=\kappa_1+c\\
 \kappa_1-\kappa_2&=\kappa_1-c.
 \end{split}
 \end{equation*}
 On the other hand, the expression of $K$ in \eqref{bb} becomes
  \begin{equation}\label{f1}
 e_{22}(\kappa_1)=\frac{2}{\kappa_1-c}e_2(\kappa_1)^2+\kappa_1c(\kappa_1-c).
 \end{equation}
The last equation of \eqref{d3} yields
 \begin{equation}\label{f3}
 \mu=-\frac{|\Phi|^2}{2\kappa_1+(\alpha +2)c}e_2(\kappa_1).
 \end{equation}
 Differentiating this identity with respect to $e_2$, we have
\begin{equation*}
e_2(\mu)=-\frac{|\Phi|^2}{2\kappa_1+(\alpha +2)c}e_{22}(\kappa_1)+\frac{4|\Phi|^2}{(2\kappa_1+c(\alpha +2))^2}e_2(\kappa_1)^2.
\end{equation*}
The fourth equation of \eqref{d3} is
$$e_2(\mu)=1+\frac{(\kappa_1+c)|\Phi|^2}{\alpha}.$$
From the last two expressions of $e_2(\mu)$, we get $e_{22}(\kappa_1)$, namely,  
\begin{equation}\label{f2}
e_{22}(\kappa_1)=(2 \kappa_1+(\alpha +2) c)\left(\frac{4 e_2(\kappa_1)^2}{(2 \kappa_1+(\alpha +2) c)^2} - \frac{\kappa_1+c}{\alpha }-\frac{1}{|\Phi|^2 }\right).
\end{equation}
  Equating $e_{22}(\kappa_1)$ from \eqref{f1} and \eqref{f2}, we get   
\begin{equation}\label{E3}
\begin{split}
&(2 \alpha c (\alpha +4) |\Phi|^2)e_2(\kappa_1)^2\\
&= (c-\kappa_1) (2 \kappa_1+(\alpha +2) c) \Big(|\Phi|^2 \left(\alpha  c \left(\kappa_1^2-\kappa_1 c+\kappa_1+c\right)+2 (\kappa_1+c)^2\right)\\
&+\alpha  (2 \kappa_1+(\alpha +2) c)\Big).
\end{split}
\end{equation}
\begin{enumerate}
\item Case $\alpha+4=0$. Then  \eqref{E3} is simplified  into
 $$4+|\Phi|^2 ( (2c-1)\kappa_1-c)=0,$$
 hence 
 \begin{equation}\label{f4}
 |\Phi|^2=\frac{4}{(1-2c)\kappa_1+c}.
 \end{equation}
 Differentiating with respect to $e_2$, we have 
 $$2\mu=\frac{8c-4}{((1-2c)\kappa_1+c)^2}e_2(\kappa_1).$$
 Simplifying again by $e_2(\kappa_1)\not=0$, using \eqref{f3} and the value of $|\Phi|^2$ in \eqref{f4}, we have $\kappa_1-c+1=0$. This gives that $\kappa_1 $ is a constant, which it is a contradiction. 
 \item Case $\alpha\not=-4$. The argument   is  similar. From \eqref{E3}, the expression of $e_2(\kappa_1)^2$ is  
\begin{equation}\label{E4}
e_2(\kappa_1)^2:=M_2=\frac{(\kappa_1-c)((\alpha +2) c+2 \kappa_1)}{2 \alpha c  (\alpha +4)   |\Phi|^2}M_1,
\end{equation}
 where
\begin{equation*}
\begin{split}
M_1&=   c^2 |\Phi|^2 (\alpha  (\kappa_1-1)-2)-2 \kappa_1 \left(\alpha +|\Phi|^2 k\right) \\
&-c \left(\alpha  (\alpha +2)+|\Phi|^2\kappa_1 (\alpha +\alpha  \kappa_1+4)\right).
\end{split}
\end{equation*}
We now differentiate with respect to $e_2$, obtaining
$$2e_2(\kappa_1)e_{22}(\kappa_1)=e_2(\kappa_1)\frac{d}{d\kappa_1}(M_2).$$
Simplifying by $e_2(\kappa_1)$ and using the expressions of $e_{22}(\kappa_1)$ in \eqref{f2} and of $e_2(\kappa_1)^2$ in \eqref{E4},   we get 
$$|\Phi|^2M_3= 3 \alpha  (\alpha +4) ((\alpha +2) c+2 \kappa_1),$$
where 
\begin{equation*}
\begin{split}
M_3&=\alpha  (\alpha +2) c^3-4 (\alpha +3) c^2 (\alpha +\alpha  \kappa_1+2)\\
&+c \kappa_1 \left(\alpha ^2 (3 \kappa_1-2)+2 \alpha  (5 \kappa_1-12)-48\right)-4 (\alpha +6) \kappa_1^2.
\end{split}
\end{equation*}
Notice that $M_3$ cannot vanish identically because this would yield that $\kappa_1$ is a constant function. Thus we get the value of $|\Phi|^2$, namely,
$$|\Phi|^2=\frac{3 \alpha  (\alpha +4) ((\alpha +2) c+2 \kappa_1)}{M_3}.$$
Differentiating with respect to $e_2$, and because $e_2(|\Phi|^2)=2\mu$, we obtain 
\begin{equation*}
\begin{split}
0&=(\alpha +2) c^2 (\alpha  (c-4)-12)+\left(2 c (-\alpha  (\alpha +12)-2 \alpha  (\alpha +3) c-24)\right)\kappa_1\\
&+\left(\alpha  (3 \alpha +10) c-4 (\alpha +6)\right)\kappa_1^2.
\end{split}
\end{equation*}
This polynomial of degree $2$ on $\kappa_1$ has constant coefficients and they are not mutually identically $0$ because $\alpha\not=-4$. This proves that $\kappa_1$ is a constant function, obtaining a contradiction. This completes the proof of Thm. \ref{t22}.
 \end{enumerate}
\section{Proof of Theorem \ref{t3}}\label{s6}

  Let $\Sigma$  be an $\alpha$-stationary surface with constant mean curvature.  If a principal curvature is constant, then the other one is also constant because $H$ is constant. In such a case, $\Sigma$ is an isoparemetric surface. This implies that $\Sigma$ is   a plane or a sphere, proving the result in this situation. 
  
  Suppose now that $\kappa_1$ and $\kappa_2$ are not constant and we will arrive to a contradiction. In order to simplify the notation, we identify $\Phi(p)$ with $p$, $p\in\Sigma$.  Let  $\Omega\subset\Sigma$ be an open set  formed by points $p$ where $ N(p)\not=\pm \frac{p}{|p|}$ and free of umbilic points.  This is possible because $\Sigma$ is not a sphere. 
Let $H=c$. Then Eq. \eqref{eq1} writes as 
$$c|p|^2-\alpha\langle N(p),p\rangle=0,\quad p\in \Omega.$$
Let $\{e_1,e_2\}$ be an orthonormal frame in $\Omega$ formed by principal directions, where $Se_i=\kappa_i e_i$. 
Differentiating the above identity with respect to $e_1$ and $e_2$, we obtain 
\begin{equation*}
\begin{split}
(2c+\alpha\kappa_1)\langle e_1,p\rangle&=0,\\
(2c+\alpha\kappa_2)\langle e_2,p\rangle&=0.
\end{split}
\end{equation*}
Since $N(p)\not=\pm \frac{p}{|p|}$, we deduce that $\langle e_1,p\rangle\not=0$ or $\langle e_2,p\rangle\not=0$ in a subdomain $\widetilde{\Omega}\subset\Omega$.   Thus   we have $2c+\alpha\kappa_1=0$ in $\widetilde{\Omega}$, hence $\kappa_1$ is constant in $\widetilde{\Omega}$. This is a contradiction and this finishes the proof of Thm. \ref{t3}.

\section*{Acknowledgements}
The author  has been partially supported by MINECO/MICINN/FEDER grant no. PID2023-150727NB-I00,  and by the ``Mar\'{\i}a de Maeztu'' Excellence Unit IMAG, reference CEX2020-001105- M, funded by MCINN/AEI/10.13039/ 501100011033/ CEX2020-001105-M.

\end{document}